\newtheorem{theorem}{Theorem}[section]
\newtheorem{lemma}[theorem]{Lemma}
\theoremstyle{definition}
\newcommand{\tup}{\bigtriangleup}
\newcommand{\ol}{\overline}
\newcommand{\ob}{\ol{B}}
\newcommand{\0}{\emptyset}
\newcommand{\sh}{\mbox{Sh}}
\newcommand{\e}{\varepsilon}
\newcommand{\al}{\alpha}
\newcommand{\ph}{\varphi}
\newcommand{\Ph}{\Phi}
\newcommand{\be}{\beta}
\newcommand{\ga}{\gamma}
\newcommand{\si}{\sigma}
\newcommand{\ta}{\theta}
\newcommand{\om}{\omega}
\newcommand{\da}{\delta}
\newcommand{\nin}{\not\in}
\newcommand{\imp}{\mbox{Imp}}
\newcommand{\dist}{\mbox{dist}}
\newcommand{\diam}{\mbox{diam}}
\newcommand{\C}{\mbox{$\mathbb{C}$}}
\begin{document}

\date{August 22, 2007}
\title[Monotone images of Cremer Julia sets]
{Monotone images of Cremer Julia sets}

\author{Alexander~Blokh}

\thanks{The first author was partially
supported by NSF grant DMS-0456748}

\author{Lex Oversteegen}

\thanks{The second author was partially  supported
by NSF grant DMS-0405774}

\address[Alexander~Blokh and Lex~Oversteegen]
{Department of Mathematics\\ University of Alabama at Birmingham\\
Birmingham, AL 35294-1170}

\email[Alexander~Blokh]{ablokh@math.uab.edu}
\email[Lex~Oversteegen]{overstee@math.uab.edu}
\subjclass[2000]{Primary 37B45;  Secondary: 37F10, 37F20}

\keywords{Complex dynamics; Julia set; Cremer fixed point; monotone
decomposition}

\begin{abstract} We show that if $P$ is a quadratic polynomial with a fixed
Cremer point and  Julia set $J$, then for any monotone map $\ph:J\to
A$ from $J$ onto a locally connected continuum $A$, $A$ is a single
point.
\end{abstract}

\maketitle

\section{Introduction}\label{intro}

Let $P:\C\to\C$ be a complex polynomial of degree $d$ and let $J_P$
be its Julia set. The topological structure of connected Julia sets
$J=J_P$ and the dynamics of $P|_{J}$ have been studied in a number of
papers. The best case, from the topological point of view, is the
case when $J$ is locally connected. Then $J$ is homeomorphic to the
quotient space of the unit circle $S^1/\sim=J_\sim$ with respect to
a specific equivalence relation $\sim$, called an \emph{invariant
lamination}. In this case the map $\sigma:S^1\to S^1$, defined by
$\sigma(z)=z^d$  on the unit circle in the complex plane $\C$,
induces a map $f_\sim:J_\sim\to J_\sim$ which is conjugate to the
restriction $P|_J$. In the following, spaces like $J_\sim$ are called
\emph{topological Julia sets} while the induced maps $f_\sim$ on
them are called \emph{topological polynomials}. Thus, in the locally
connected case, topological polynomials acting on topological
(locally connected) Julia sets are good (one-to-one) models for true
complex polynomials acting on their Julia sets.

Even if $J$ is not locally connected this approach works in many cases. Let $z$
be a periodic point of period $n$ of a polynomial $P$. The point $z$ is called
an \emph{irrational neutral periodic point} if $(P^n)'(z)=e^{2\pi i\al}$ with
$\al$ irrational. In what follows we refer to such points as \emph{CS-points}
(this comes from the fact that all such\ points are either Cremer or Siegel
points). A CS-point $p$ is said to be a \emph{Cremer point} if the power of the
map which fixes $p$ is not linearizable in a small neighborhood of $p$. Suppose
that $P$ is a polynomial with connected Julia set and no CS-points. In his
fundamental paper \cite{kiwi97} Jan Kiwi obtained for such $P$ an invariant
lamination $\sim_P$ on $S^1$ such that $P|_{J_{P}}$ is semi-conjugate to the
induced map $f_{\sim_P}:J_{\sim_P}\to J_{\sim_P}$ by a monotone map $m:J_P\to
J_{\sim_{P}}$ (by \emph{monotone} we mean a continuous map whose point
preimages are connected). In addition Kiwi proved in \cite{kiwi97} that for any
$P$-periodic point $p\in J_P$ the set $J_P$ is locally connected at $p$ and
$m^{-1}\circ m(p)=\{p\}$.

Thus, Kiwi's approach allows one to describe the dynamics of these polynomials
restricted to their Julia sets  by means of a certain monotone map onto a
locally connected continuum. This dynamically motivated monotone map is a
semiconjugacy between the polynomial and the corresponding induced map (in this
case the induced map is a topological polynomial). The aim of this paper is to
show that in some cases the entire approach which uses modeling of the Julia
set by means of a monotone map onto a locally connected continuum breaks down
for topological reasons. By a \emph{basic Cremer polynomial} we mean a
quadratic polynomial $P$ with a \emph{fixed} Cremer point. Our main result is
Theorem~\ref{dege}.

\setcounter{section}{2} \setcounter{theorem}{1}

\begin{theorem}\label{dege} If $P$ is a basic Cremer polynomial
and $\ph:J_P\to A$ is a monotone map onto a locally connected
continuum $A$, then $A$ is a single point.
\end{theorem}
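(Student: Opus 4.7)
The plan is to argue by contradiction: suppose $A$ has more than one point, and derive that the $\ph$-fiber through the Cremer fixed point $p$ must in fact equal all of $J_P$. The primary engine is the topological rigidity of $J_P$ at $p$.

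First I would make a topological reduction. Any monotone surjection of a continuum onto a locally connected continuum factors through the maximal upper-semicontinuous monotone decomposition with locally connected quotient (in the spirit of Jones's aposyndetic decomposition), so the theorem amounts to showing that $J_P$ admits no non-trivial such decomposition. The elementary topological fact I would use is that under a monotone surjection $\psi:X\to Y$ with $Y$ locally connected, if $\psi^{-1}(\psi(x))=\{x\}$ then $X$ is locally connected at $x$ (just pull back small connected open neighborhoods of $\psi(x)$). Since $J_P$ fails to be locally connected at the Cremer point $p$, the fiber $F=\ph^{-1}(\ph(p))$ is already forced to be non-degenerate; and since $P$ is locally a homeomorphism off the critical point, the same argument applies at every iterated $P$-preimage of $p$, so every such fiber is non-degenerate as well.

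The main step is to promote non-degeneracy of $F$ to the equality $F=J_P$. For this I would invoke the structural theory of basic Cremer Julia sets: there is an uncountable collection of external angles $\Theta\subset\ucirc$ whose impressions $\imp(\al)$ are non-degenerate continua containing $p$, and the union of their $P$-backward orbits is dense in $J_P$. The decisive claim is that each such impression lies inside $F$. Assuming this, $F$ is closed, connected, and dense in $J_P$, hence equals $J_P$, contradicting that $A$ has more than one point. The absorption of impressions by $F$ would be argued by analyzing how these impressions spiral into $p$ in $J_P$ and how their $\ph$-images can sit in the locally connected continuum $A$ through $\ph(p)$: the combinatorial permutation $\imp(\al)\mapsto\imp(\sigma(\al))$ induced by $P$, together with the non-aposyndetic behavior of $J_P$ at $p$, should rule out any way for $\ph$ to separate such an impression from $\ph(p)$.

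The hardest step is precisely this absorption argument in the last paragraph. The subtlety is that non-degenerate impressions at $p$ can accumulate on $p$ in ways that are, at first glance, compatible with many candidate collapses; to exclude them one must use the geometric/combinatorial description of the Cremer impressions supplied by the authors' prior work, combined with the dynamical action of $P$ on angles. Once the absorption statement is established, density of the backward orbit of the family $\{\imp(\al):\al\in\Theta\}$ and connectedness of $F$ finish the proof routinely.
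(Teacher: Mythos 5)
Your proposal takes a genuinely different route from the paper, but it has gaps that are not merely technical --- I think the approach as outlined cannot be completed without being rebuilt from scratch. The central unproven step, which you yourself flag, is the ``absorption'' claim that every impression $\imp(\al)$ containing $p$ must lie in the single fiber $F=\ph^{-1}(\ph(p))$. Nothing in the hypotheses forces this: $\ph$ is an arbitrary monotone map, not a semiconjugacy, so fibers are not pushed around by $P$, and the map $\imp(\al)\mapsto\imp(\si(\al))$ induced by $P$ gives you no leverage on $\ph$. Non-degeneracy of $F$ (which does follow from non-local-connectivity of $J_P$ at $p$, granting that fact) tells you nothing about which non-degenerate piece through $p$ is collapsed, and there is no a priori reason the collapsed piece should contain any particular impression. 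A second, logically independent gap: even if you had $\imp(\al)\subset F$ for every $\al$ in the rotational Cantor set, $F$ would contain only $\bigcup_\al \imp(\al)$, not its backward orbit; your density assertion invokes backward orbits of the impressions, but those backward images do not lie in $F$, so the conclusion $F=J_P$ does not follow. To make that part work you would need the union of the impressions themselves to be dense in $J_P$, which is not established and I do not believe is known.

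By contrast, the paper sidesteps both issues. It never tries to control the fiber through $p$ at all. Instead, after using Moore's theorem to realize $A$ as a dendrite and Lemma~\ref{dist0} to show that images of external rays land, it picks a \emph{generic} order-two cutpoint $x$ of $A$ whose fiber $Z=\ph^{-1}(x)$ avoids the whole backward orbit of $\{p,c\}$ (possible since dendrites have only countably many branch points). Two $A$-rays land at $x$, so $Z$ captures two principal sets $\Pi_{\al''}, \Pi_{\be''}$. Heath's theorem rules out the angles ever becoming antipodal under $\si$, the tent-map estimate produces an iterate where the angular gap is large, and then the fact that $p\in\imp(\ga)$ for all $\ga$ in the rotational Cantor set $F$ (GMO99) forces $p$ or $-p$ into a region that $P^m(R_{\al''}\cup Z\cup R_{\be''})$ separates away --- a contradiction that needs no dynamical compatibility of $\ph$. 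If you want to salvage your strategy, the missing ingredient is some mechanism, independent of any invariance of the fibers, forcing a specific fiber to swallow specific impressions; the paper's cutpoint-plus-combinatorics argument is precisely engineered to avoid needing such a mechanism.
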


 We show in
Theorem~\ref{dege} that if $P$ is a basic Cremer polynomial then its Julia set
$J_P$ cannot be mapped onto a non-degenerate locally connected continuum by a
monotone map. Thus, in the case of a basic Cremer polynomial, studying the
Julia set by means of a monotone map onto a locally connected continuum is
impossible, and one needs a different approach (see, e.g., \cite{bo06}).

\setcounter{section}{1} \setcounter{theorem}{0}

\section{Main Theorem}

An \emph{unshielded} continuum $K\subset \C$ is a continuum which
coincides with the boundary of the infinite complementary component
of $K$. Given an unshielded continuum $K$ we denote by $R_\al$ the
external (conformal) ray corresponding to the external angle $\al$
and by $\Pi_\al=\ol{R_\al}\setminus R_\al$ the corresponding
principal set (usually, the continuum is fixed in the beginning of
the argument, so we can omit $K$ from the notation; if we do not
want to specify the angle we will omit $\al$ too). A crosscut $C$ of
$K$ is an open arc in $\C\setminus K$ whose closure meets $K$ in two
distinct points. Given an external ray $R$, a crosscut $C$ is said
to be \emph{$R$-transversal} if $\ol{C}$ intersects $\ol{R}$
(topologically transversely) only once; if $t\in R$ then by $C_t$ we
always denote an $R$-transversal crosscut  such that $\ol{C_t}\cap
\ol{R}=\{t\}$. The \emph{shadow of $C$}, denoted by $\sh(C)$, is the
bounded component of $\C\setminus (C\cup K)$. Given an external ray
$R$ we define   the \emph{(induced) order} on $R$ so that $x<_R
y\,(x, y\in R)$ if and only if the point $x$ is ``closer to $K$
\textbf{on the ray $R$} than $y$''.

Our main aim is to prove Theorem~\ref{dege}. However in order to do
so we first prove a geometric Lemma~\ref{dist0} which could be of
independent interest. Given a ray $R$  we call a family of
$R$-transversal crosscuts $C_t$, $t\in R$, an \emph{$R$-defining}
family of crosscuts if for each $t\in R$ there exists an
$R$-transversal crosscut $C_t$ such that $\diam(C_t)\to 0$ as $t\to
K$ and $\sh(C_t)\subset \sh(C_s)$ if $t<_R s$.

\begin{lemma}\label{dist0} If $K$ is an unshielded continuum and $R$
is an external ray to $K$ then there exists an $R$-defining family
of $R$-transversal crosscuts $C_t$, $t\in R$.
\end{lemma}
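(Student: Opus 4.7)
The natural strategy is conformal uniformization of the unbounded complementary component $U_\infty$ of $K$. Since $K$ is unshielded, $\partial U_\infty=K$, and I fix a Riemann map $\psi:\mathbb{D}_\infty\to U_\infty$ with $\psi(\infty)=\infty$. The external ray $R$ then pulls back to a radial segment $\rho=\{(1+s)z_0:s>0\}$ for some $z_0\in\mathbb{T}$, and each $t\in R$ has a unique preimage $\zeta_t=(1+s_t)z_0\in\rho$.

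The main ingredient is the length--area inequality applied at the boundary point $z_0$. For $r>0$ let $A_r=\{z\in\mathbb{D}_\infty:|z-z_0|=r\}$; this is a circular arc with both endpoints on $\mathbb{T}$, meeting $\rho$ transversely exactly at $(1+r)z_0$. Denote by $L(r)$ the length of $\psi(A_r)$. Cauchy--Schwarz combined with Fubini applied to $|\psi'|^2$ over $\{z\in\mathbb{D}_\infty:|z-z_0|<r_0\}$ (whose $\psi$-image has finite area) yields
$$\int_0^{r_0}\frac{L(r)^2}{r}\,dr<\infty,$$
forcing a sequence $r_n\downarrow 0$ along which $L(r_n)\to 0$. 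For such $r_n$ the arc $\psi(A_{r_n})$ has finite length, hence extends continuously to its two endpoints on $\mathbb{T}$, whose images lie on $K=\partial U_\infty$. Thus $\Gamma_n:=\psi(A_{r_n})$ is a genuine crosscut of $K$ with $\diam(\Gamma_n)\to 0$, crossing $R$ transversely at $t_n=\psi((1+r_n)z_0)$.

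Nestedness of shadows is automatic: because $\psi$ is a homeomorphism onto $U_\infty$, the $\psi$-image of the bounded region $\{z\in\mathbb{D}_\infty:|z-z_0|<r_n\}$ cut off by $A_{r_n}$ is precisely the bounded component of $\mathbb{C}\setminus(\Gamma_n\cup K)$, namely $\sh(\Gamma_n)$. Since $r_{n+1}<r_n$, one gets $\sh(\Gamma_{n+1})\subset\sh(\Gamma_n)$. The sequence $\{C_{t_n}:=\Gamma_n\}$ therefore has diameters tending to zero along $R$ with nested shadows.

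The chief obstacle is the $R$-transversality requirement $\overline{C_{t_n}}\cap\overline R=\{t_n\}$: one must prevent an endpoint of $\Gamma_n$ from landing in the principal set $\Pi_\alpha=\overline R\setminus R$. Because the endpoints of $A_{r_n}$ on $\mathbb{T}$ differ from $z_0$, standard prime-end theory shows their $\psi$-images lie in $K$ away from $\Pi_\alpha$ for generic $r_n$, and one may perturb the sequence to achieve this. If the definition demands a crosscut at literally every $t\in R$ rather than at a cofinal sequence, one must additionally deform $\Gamma_n$ near $(1+r_n)z_0$ so that the resulting arc passes through the prescribed $\zeta_t$; this is handled by a further localized length--area argument at the interior point, where $\psi$ is holomorphic, contributing only a vanishing amount to the diameter.
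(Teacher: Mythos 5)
Your length--area construction correctly produces a \emph{null sequence} of small crosscuts $\Gamma_n$ with nested shadows: the inequality $\int_0^{r_0} L(r)^2/r\,dr<\infty$ does yield $r_n\downarrow 0$ with $L(r_n)\to 0$, the finite-length arcs extend to $K$, and the nesting $\sh(\Gamma_{n+1})\subset\sh(\Gamma_n)$ follows since $\psi$ is a homeomorphism. This is essentially the fact the paper imports with the citation to Milnor (``we can choose a sequence of pairwise disjoint transversal crosscuts $C_{h_i}$, $h_i\to K$, so that the area of their shadows and the diameters converge to $0$''). So you have reproved the paper's \emph{starting point}, not its lemma.

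The actual content of Lemma 1.1 is the upgrade from a cofinal sequence to a crosscut $C_t$ at \emph{every} $t\in R$ with $\diam(C_t)\to 0$, and that is precisely where the paper spends its effort (the inscribed-ball argument showing $n(t),p(t)\to 0$ for all $t\to K$, via J\o rgensen's connectedness theorem, followed by the interpolation to get nested shadows). Your last paragraph dismisses this step, and the dismissal does not hold up. If you ``deform $\Gamma_n$ near $(1+r_n)z_0$ so that the resulting arc passes through $\zeta_t$,'' the deformed arc must, in $\mathbb{D}_\infty$, travel from $A_{r_n}$ inward to $(1+s_t)z_0$; its image then shadows the arc $R[t,t_n]$ of the ray, whose diameter is bounded below by $\diam(\Pi)>0$ whenever $R$ does not land. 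So the extra term is \emph{not} vanishing. The alternative, a ``localized length--area argument at the interior point,'' also fails: a circle centered at $(1+s_t)z_0$ must have radius at least $s_t$ to reach $\mathbb{T}$, and such a circle meets the radius $\rho$ only at $(1+s_t+r')z_0\neq\zeta_t$, so the resulting crosscut does not pass through $t$. In short, nothing in the proposal gives a \emph{small} $R$-transversal crosscut through an \emph{arbitrary} $t\in R$; that is the real theorem here, and it is the part you have not proved. (The $R$-transversality issue you flag --- keeping the endpoints of $\Gamma_n$ off $\Pi$ --- is also only asserted, not argued, but it is the lesser of the two gaps.)
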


\begin{proof} Given a point $t\in R$, any $R$-transversal crosscut $C_t$
consists of two semi-open arcs (half-open arcs) connecting $t$ to $K$. On the
uniformization plane one of them will ``grow'' from the point corresponding to
$t$ in the positive (counterclockwise) direction with respect to the ray; such
semi-open arcs will be called \emph{positive arcs at $t$}. Similarly we define
\emph{negative arcs at $t$}. The infimum of the diameters of all positive arcs
at $t$ is denoted by $p(t)$; similarly we define $n(t)$ for negative arcs at
$t$.

By way of contradiction and without loss of generality we may assume
that there exists $\ga>0$ and a sequence $t_i\to K$ in $R$ such that
$n(t_i)>\ga$, $i=1, 2, \dots$. By \cite{miln00} we can choose a
sequence of pairwise disjoint transversal crosscuts $C_{h_i}$,
$h_i\to K$ so that the area of their shadows $\sh(C_{h_i})$ and the
diameters $\diam(C_{h_i})$ converge to $0$. Hence we can find a
crosscut $C_{h_j}=C_j$ so that the area of $\sh(C_j)$ is less than
$\ga^2/99$ and $\diam(C_j)\le \ga/99$. Then the negative ``half'' of
$C_j$, the part of the ray $R$ contained in $\sh(C_j)$, and the set
$K$ enclose an open simply connected domain $U$ on the plane, the
``negative half'' of $\sh(C_j)$.

Choose $t=t_i<_R h_j$. Then the arc length of the subarc $[t,h_j]$
in $R$ is more than $2\ga/3$. Choose a point $x\in U$ so that there
is a straight segment from $t$ to $x$ inside $U$ of length less than
$\ga/9$ (since $R$ is a smooth curve such a segment exists). Consider
all closed balls $\ob$ contained in $\ol{U}$ such that $x\in \ob$.
By compactness, this family contains a ball $\ob=\ob(y, \e)$ of
maximal radius. Set $\partial \ob=S$. Let us show that the set $A=S\cap
\partial U$ has more than one point. Clearly, $A$ is non-empty
(otherwise a ball with the same center and slightly bigger radius
will contain $x$ and will be contained in $\ol{U}$, a
contradiction). Suppose that $A=\{z\}$ is a single point. A tiny
shift of $y$ away from $z$ along the line $zy$ creates a new point
$y'$. We are about to construct a ball centered at $y'$ of  radius
bigger
than $\e$ contained in $\ol{U}$ and containing $x$ which will
contradict the assumptions about $\ob$. Consider two cases.

(1) The angle $\angle xyz$ is obtuse. Consider the ball $\ob'=\ob(y', \e)$. If
$y'$ is sufficiently close to $y$, then $x\in \ob'$. Moreover, the boundary
$S'$ of $\ob'$ consists of two arcs, $L'$ and $L''$, where $L'$ is outside $B$
and $L''\subset B$. Then $L'$ is disjoint from $\partial U$ because it is very
close to the half-circle of $S$ which is cut off $S$ by the diameter of $B$
perpendicular to $yz$ and hence positively distant from $\partial U$. On the
other hand, $L''$ is disjoint from $\partial U$ because $L''\subset B$. Hence
$\ob'\subset U$ and a slightly bigger ball with the same center will contain
$x$ and will be contained in $U$, a contradiction.

(2) The angle $\angle xyz$ is not obtuse. 
Let $H$
be the line segment through $x$ and perpendicular to the segment
$yz$. Then the component  $L$ of $S\setminus H$ \emph{not}
containing $z$ is positively distant from $\partial U$. Let $p\in
S\cap H$. Since the angle of the triangle $\tup y'zp$ at $p$ is
greater than the angle of this triangle at $z$, we see that $d(y',
z)>d(y', p)\ge d(y',x)$. On the other hand, since $\angle xyz$ is
not obtuse then $\angle pyy'$ is not acute, and so $d(p,
y')=\e'>d(p,y)=\e\ge d(x, y)$. Set $B'=B(y', \e')$. As before, the
boundary $S'$ of $\ob'$ consists of two arcs, $L'$ and $L''$, where
$L'$ is outside $B$ and $L''\subset B$. Then $L'$ is disjoint from
$\partial U$ because it is very close to $L$ and $L''$ is disjoint
from $\partial U$ because $L''\subset B$, a contradiction since
$\e'>\e$.

Thus, $\ob$ must intersect $\partial U$ at at least two points.
Since the area of $\sh(C_j)$ is less than $\ga^2/99$ then
$\e<\ga/17$. If there is a point $a\in C_j\cap S$ then there is a
negative arc at $t$ - the concatenation of the straight segment from
$t$ to $x$, the segment inside $\ob$ from $x$ to $a$, and the
appropriate part of $C_j$ - of diameter less than
$\ga/9+2\ga/17+\ga/99<\ga$, a contradiction. If there is a point
$b\in K\cap S$ then there is a negative arc at $t$ - the
concatenation of the straight segment from $t$ to $x$ and the
segment inside $\ob$ from $x$ to $b$ - of diameter less than
$\ga/9+2\ga/17<\ga$, a contradiction. Hence $M=\ob\cap \partial
U=S\cap \partial U\subset R$. On the other hand, by a theorem of
J\text{\o}rgensen (see \cite{jor} and \cite{pom92}) $M$ is
connected. Hence $M$ is a non-degenerate subarc of $S$. Since
$d(S,K)>0$, we can construct another ball $\ob'$ which intersects
$M$ only at its endpoints such that $\ob'\cap K=\0$. Then $\ob'\cap
R$ cannot be connected since $\ob'\cap R$ misses the entire arc $M$
(except for its endpoints) which contradicts the theorem of
J\text{\o}rgensen. Hence $n(t)\to 0, p(t)\to 0$ as $t\to K$ which
shows that there is a family of $R$-transversal crosscuts $C_t$,
$t\in R$, such that $\diam(C_t)\to 0$ as $t\to K$.

This family will be modified  so that $\sh(C_t)\subset \sh(C_s)$ if $t<_R s$.
Observe that
$C_t$ is the union of a negative and a positive arc at $t$. We
modify negative arcs and positive arcs separately  and since it does not matter which
side we consider we denote the one-sided arcs we deal with by $S_t$.
Choose $C_t$ so that for all $s\le_R t$ we have $\diam(C_s)\le \e$
for a small $\e$, follow the ray beyond $t$ towards $K$, and denote
the segment of the ray from $t$ to a point $s\in R$ with $s<_R t$ by
$Q(t, s)$. Let $\Pi$ be the principal set of $R$ and consider two
cases.

(1) Suppose that for every $s, s<_R t$ we have $d(s,t)< 3\e$. By
definition $S_t$ is positively distant from $\Pi$; let
$\da=\dist(S_t, \Pi)>0$ and choose $u<_R t$ so that for all $s\le_R
u$ we have $\diam (S_s)<\min(\e/9, \da/99)$ and $\dist(u,
\Pi)<\min(\e/9, \da/99)$. Then $S_u$ is disjoint from $S_t$ by the
choice of $\da$. Since the ray is smooth, it is easy to see that we
can create a family of short pairwise disjoint arcs $A_v$ from
points $v\in Q(t, u)$ to $S_t$ of diameter less than $4\e$ where
each connector ends at a point $e_v\in S_t$; moreover, these arcs
can be chosen disjoint from $S_u$ and each other and such that
$A_v\cap R=\{v\}$. Denote the union of $A_v$ and the piece of $S_t$
from $e_v$ to $K$ by $S'_v$. Then the family $S'_v, v\in Q(t, u)$
together with $S_t=S'_t$ and $S_u=S'_u$ are the required crosscuts.

 (2) Suppose that there is the first point $u\in R,$ $s<_R
t$ such that $\dist(t, u)=3\e$. Then $S_u$ is disjoint from $S_t$,
and we can proceed the same way as before. That is, we get a family
of negative arcs $S'_v, v\in Q(t, u)$ which together with $S_t=S'_t$
and $S_u=S'_u$ satisfies the second condition of the lemma and
$\diam(S'_v)<5\e$.

Let us proceed with this construction. If  case (1) takes place then
on the next step we replace $\e$ by $\e/9$. If  case (2) takes
place we may need to make several steps until we finally get $u$
such that for all $s\le_R u$ we have $\diam (S_s)<\e/9$. From this
time on we proceed with $\e$ replaced by $\e/9$. Clearly, this way
we complete the construction and thus the proof of the lemma.
\end{proof}

Given an external ray $R_\al$ and an $R_\al$-defining family of
crosscuts $C_t$ one can define the impression by
$\imp(\al)=\cap_{t\in R_{\al}} \ol{Sh(C_t)}$. It can be easily shown
that this definition is equivalent to the standard one and that
$\imp(\al)$ is independent of the choice of the $R_\al$-defining
family of crosscuts \cite{pom92}.

Let us now state a few facts about basic Cremer polynomials $P$
(see, e.g., \cite{grismayeover99}). The notation introduced here
will be used from now on. For convenience, parameterize quadratic
polynomials $P$ as $z^2+v$. Denote the Cremer fixed point of $P$ by
$p$ and the critical point of $P$ by $c$ ($c=0$, however we will
still denote the critical point of $P$ by $c$). Also, denote by
$\si$ the angle doubling map of the circle. It is well-known that if
$P'(p)=e^{2\pi i\rho}$ then there exists a special \emph{rotational}
Cantor set $F\subset S^1$ such that $\si$ restricted on $F$ is
semiconjugate to the irrational rotation by the angle $2\pi \rho$
\cite{bullsent94}; the semiconjugacy $\psi$ is not one-to-one only
on the endpoints of countably many intervals complementary to $F$ in
$S^1$ ($\psi$ maps the endpoints of each such interval into one
point). Of the complementary intervals  the most important one is
the \emph{critical leaf (diameter)} with the endpoints denoted below
by $\al$ and $\be=\al+1/2$ (for definiteness we assume that
$0<\al<1/2$). The limit set $F=\om(\al)$ is exactly the set of
points whose entire orbits are contained in $[\al, \be]$ where the
arc is taken counterclockwise from $\al$ to $\be$.  By Theorem 4.3
of \cite{grismayeover99} we have that $p\in \imp(\ga)$ for every
$\ga\in F$, and $\{p, c, -p\}\subset \imp(\al)\cap \imp(\be)$.

\begin{theorem}\label{dege1} If $P$ is a basic Cremer polynomial
and $\ph:J_P\to A$ is a monotone map onto a locally connected
continuum $A$, then $A$ is a single point.
\end{theorem}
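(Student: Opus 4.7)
The plan is to argue by contradiction: assume $A$ has more than one point, and set $K_p:=\ph^{-1}(\ph(p))$, a proper subcontinuum of $J_P$ containing $p$. The goal is to force $K_p=J_P$.

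First I exploit the local connectedness of $A$. Given $\e>0$, choose a connected open neighbourhood $U$ of $\ph(p)$ in $A$ of diameter less than $\e$ with $\ol U\ne A$. Then $V:=\ph^{-1}(U)$ is an open, $\ph$-saturated, connected neighbourhood of $K_p$ in $J_P$ (preimage of a connected set under a monotone surjection is connected). Since $J_P\setminus V$ is compact and disjoint from $K_p$, there is an open set $W\subset\C$ with $K_p\subset W$ and $\ol W\cap J_P\subset V$; shrinking $\e$ makes $W$ lie in an arbitrarily small Hausdorff neighbourhood of $K_p$.

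The key claim is that $\imp(\ga)\subset K_p$ for every $\ga\in F$. Fix $\ga\in F$ and use Lemma~\ref{dist0} to obtain an $R_\ga$-defining family of crosscuts $\{C_t\}$ with $\diam(C_t)\to 0$ and shadows nested. Set $K_t:=\ol{\sh(C_t)}\cap J_P$; by unshieldedness of $J_P$ this is a subcontinuum of $J_P$, and the family $\{K_t\}$ decreases to $\imp(\ga)$. Since $p\in\imp(\ga)\subset K_p\subset W$, the smallness $\diam(C_t)\to 0$ forces the two endpoints of $C_t$ on $J_P$ to coalesce, and hence by uniform continuity their $\ph$-images coalesce to $\ph(p)$ in $A$; combined with nesting of shadows and connectedness of $K_t$, this should pin $\ph(K_t)$ into $\ol U$ for $t$ sufficiently close to $J_P$, giving $\ph(\imp(\ga))\subset\ol U$ for every such $U$. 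Letting $\e\to 0$ then yields $\ph(\imp(\ga))=\{\ph(p)\}$, i.e., $\imp(\ga)\subset K_p$.

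Granted the key claim, specialising $\ga=\al$ gives $c,-p\in\imp(\al)\subset K_p$. Since $P(\imp(\ga))=\imp(\si(\ga))$ and $\si(F)=F$, the forward orbit $\{P^n(c):n\ge 0\}$ lies in $\bigcup_{\ga\in F}\imp(\ga)\subset K_p$; combined with the density in $J_P$ of the orbits assembled from these angles (and from the iterated preimages of $p$), one extracts a dense subset of $J_P$ contained in $K_p$, and closedness of $K_p$ forces $K_p=J_P$, contradicting non-degeneracy of $A$. The main obstacle is the step pushing $\ph(K_t)$ into $\ol U$ in the key claim: $\sh(C_t)$ may have large diameter even when $C_t$ is small, so one cannot reduce to the shadow itself being a small planar neighbourhood of $p$. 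The resolution must leverage Lemma~\ref{dist0}'s smallness of $C_t$ (coalescing the endpoints and their $\ph$-images toward $\ph(p)$), the monotone rigidity of $K_p$ as a saturated fibre, and the connectedness of $K_t$ as a subcontinuum of $J_P$ trapped between its endpoints; the final extension to $K_p=J_P$ will require further use of the dynamics of $P$ on $J_P$.
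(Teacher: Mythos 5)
Your route is genuinely different from the paper's. The paper avoids $K_p$ entirely: it picks a generic order-two cutpoint $x$ of the dendrite $A$ whose fibre $Z=\ph^{-1}(x)$ misses the orbits of $p$ and $c$, shows $Z$ traps two principal sets $\Pi_{\al''},\Pi_{\be''}$, rules out $\si^n(\al'')=\si^n(\be'')\pm 1/2$ via Heath's theorem, and then uses the tent-map expansion of $\si$ together with the gap structure of $F\cup(F+1/2)$ to contradict $p\in\imp(\ga)$ for $\ga\in F$. You instead try to blow a single fibre $K_p$ up to all of $J_P$. Both proofs invoke Lemma~\ref{dist0}, but to very different ends.

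There are two substantive gaps. First, the derivation of the key claim $\imp(\ga)\subset K_p$ is circular as written (the line ``$p\in\imp(\ga)\subset K_p\subset W$'' already assumes the conclusion), and the intended fix does not close: $\diam(C_t)\to 0$ forces the two endpoints of $C_t$ to coalesce, but to \emph{some} point of $\imp(\ga)$, not to $p$; and $\ph\bigl(\ol{\sh(C_t)}\cap J_P\bigr)$ only shrinks if $\ph(C_t)$ is an actual crosscut of $A$ along a subsequence (so that local connectedness of the dendrite gives small shadows in the $A$-plane). The degenerate case, where all the image arcs share a common boundary point with $A$, is exactly the case the paper must split off in proving that $A$-rays land, and your sketch has no mechanism to handle it; you flag the obstruction yourself but offer no argument. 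Second, even granting $\bigcup_{\ga\in F}\imp(\ga)\subset K_p$, the conclusion $K_p=J_P$ does not follow: $F$ has measure zero, the union of its impressions need not be dense in $J_P$, the density of the critical orbit in $J_P$ is not a known fact for Cremer quadratics, and iterated preimages of $p$ (which \emph{are} dense) have no reason to lie in $K_p$ beyond $P^{-1}(p)=\{p,-p\}$. So the final ``density'' step needs an entirely new idea, not just bookkeeping.
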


\begin{proof} Set $J=J_P$. By way of contradiction suppose that
$\ph:J\to A$ is a monotone map onto a locally connected
non-degenerate continuum $A$. Since $J$ (and hence all its
subcontinua) is non-separating then by  Moore's Theorem
\cite{moo25} the map $\Ph$, defined on the entire complex plane
$\C$, and identifying precisely \emph{fibers} (point-preimages) of
$\ph$ has $\C$ as its range. This implies that $\Ph(J)=\ph(J)=A$ is
a \emph{dendrite} (locally connected  continuum containing no simple
closed curve). External (conformal) rays $R_\al$ in the $J$-plane
are then mapped into continuous pairwise disjoint curves
$\ph(R_\al)$ in the $A$-plane; below we call the curves $\ph(R_\al)$
\emph{$A$-rays} even though the construction is purely topological.
Clearly, if $R_\al=R$ lands then so does $\ph(R)$ (i.e., $\ph(R)$
converges to a point). Let us show that in fact $\ph(R)$ lands even
if $R$ does not (in which case the principal set $\Pi$ of $R$ is not a
singleton). By Lemma~\ref{dist0} there exists an $R$-defining family
of crosscuts $C_t$. Since $\ph$ is continuous then
$\diam(\ph(C_t))\to 0$ as $t\to J$. Suppose that there is a sequence
$t_n\to J$ such that $\ol{\ph(C_{t_n})}$ is an arc for all $t_n\in
R$ (and hence a crosscut of $A$) and these crosscuts are all
pairwise disjoint. Since $A$ is locally connected then by
Carath\'eodory theory $\ph(C_{t_n})$ converges to a unique point
$x\in A$ which implies that in fact $\ph(C_t)\to x$ as $t\to J$ and
$\ph(R)$ lands. Otherwise denote by $N_t$ the ``negative half'' of
$C_t$. Without loss of generality we may assume that there exists
$t\in R$ such that for all $s<_R t$ in $R$, all $\ol{\ph(N_s)}$ have
the same point, say, $z$, in common, which immediately implies that
$\ph(R)$ lands at $z$.

The union $U$ of $P$-preimages of the points $p$ and $c$ is
countable, and so is the set $\ph(U)$. By Theorem 10.23 of
\cite{nad92} $A$ has countably many branch points. Hence $A$
contains uncountably many \emph{cutpoints} of order 2 which do not
belong to $\ph(U)$, i.e. points $x\nin \ph(U)$ such that $A\setminus
\{x\}$ consists of exactly 2 components. Choose such a cutpoint
$x\in A$ and denote the two components of $A\setminus \{x\}$ by $B$
and $C$. Let us show that there are at least two $A$-rays landing at
$x$ and cutting the entire plane into two half-planes each of which
contains a component of $A\setminus \{x\}$. Indeed, consider
$A$-rays $\ph(R_{\al'})$ and $\ph(R_{\be'})$ landing in $B$. Then
there are two arcs into which $\al', \be'$ divide the circle, and
exactly one of them contains only angles whose $A$-rays land in $B$.
Hence the entire set of angles whose $A$-rays land in $B$ is
contained in an open arc, say, $Q_B$. Similarly, the set of angles
whose $A$-rays land in $C$ is contained in an open arc $Q_C$.
Clearly, $S^1\setminus (Q_B\cup Q_C)$ is the union of two closed
arcs or points, and two angles -  one from each of the components -
would give rise to the desired two rays. Denote these angles by
$\al''$ and $\be''$.

It follows that the fiber $Z=\ph^{-1}(x)$ contains both principal
sets $\Pi_{\al''}$ and $\Pi_{\be''}$. Also, $Z$ cuts $J$ into two
connected sets ($\ph$-preimages of $B$ and $C$). Finally, no forward
$P$-image of $Z$ contains $c$ or $p$. Let us now study the
$P$-trajectory of $Z$. First we show that there exists no $n$ such
that $\si^n(\al'')=\si^n(\be'')\pm 1/2$. Indeed, otherwise $P^n(Z)$
contains $\Pi_{\si^n(\al'')}$ and
$\Pi_{\si^n(\be'')}=-\Pi_{\si^n(\al'')}$.
 Since
$c=0\nin P^n(Z)$ (by the choice of $x$) then there exists $y\in
P^n(Z), y\ne 0$ such that $-y\in P^n(Z)$ too. Then $P|_{P^n(Z)}$ is
not a homeomorphism. By a theorem of Heath (see \cite{hea96}) it
follows that then $P^n(Z)$ \emph{must} contain a critical point, a
contradiction.

Now, given two angles $\ta, \ta'$ we define $d(\ta, \ta')$ as the
length of the shortest arc between $\ta$ and $\ta'$ (we normalize
the circle so that its length is equal to $1$). It is easy to see
that $d(\si(\ta), \si(\ta'))=T(d(\ta, \ta'))$ where $T:[0, 1/2]\to
[0, 1/2]$ is the appropriate scaling of the full tent map. The
dynamics of $T$ shows then that there exists $m$ such that
$d(\si^m(\al''), \si^m(\be''))\ge 1/3$ and by the previous paragraph
we may also assume that $d(\si^m(\al''), \si^m(\be''))<1/2$.
Let
$P'(p)=e^{2\pi i\rho}$ and  $F\subset S^1$ be the rotational
Cantor set  such that $\si$ restricted on $F$ is
semiconjugate to the irrational rotation by the angle $2\pi \rho$.
Since
the longest complementary arcs to the union of two Cantor sets
$F\cup (F+1/2)$ are of length $1/4$, we see that the shorter open arc
complementary to $\si^m(\al''), \si^m(\be'')$ contains points of the
set $F$ (or $F+1/2$) and then since its length is less than $1/2$
the other arc contains points of the same set too. However the
closed connected set $P^m(R_{\al''}\cup Z\cup R_{\be''})$ does not
contain $p$ (or, respectively, $-p$). Choose an angle of $F$ (resp.
$F+1/2$) which belongs to the arc of the circle at infinity
corresponding to the part of the plane not containing $p$ (resp.
$-p$). Then its impression does not contain $p$ (resp. $-p$), a
contradiction.
\end{proof}

\bibliographystyle{amsalpha}
\bibliography{/lex/references/refshort}
\providecommand{\bysame}{\leavevmode\hbox
to3em{\hrulefill}\thinspace}
\providecommand{\MR}{\relax\ifhmode\unskip\space\fi MR }
\providecommand{\MRhref}[2]{%
  \href{http://www.ams.org/mathscinet-getitem?mr=#1}{#2}
} \providecommand{\href}[2]{#2}

\end{document}